\newtheorem{thm}{Theorem}[section]
\newtheorem{prop}[thm]{Proposition}
\newtheorem{lem}[thm]{Lemma}
\newtheorem{claim}{Claim}[section]
\numberwithin{equation}{section}
\def\HH{{ \mathcal{H}}}
\author{Michitaka Furuya\affiliationmark{1}\thanks{This work was supported by JSPS KAKENHI Grant number 26800086 and 18K13449.}}
\title[Formatting an article for DMTCS]{Forbidden subgraphs for constant domination number}
\affiliation{
Kitasato University, Japan}
\keywords{Domination number, Forbidden induced subgraph, Ramsey number}
\begin{document}
\publicationdetails{20}{2018}{1}{19}{4364}
\maketitle
\begin{abstract}
In this paper, we characterize the sets $\mathcal{H}$ of connected graphs such that there exists a constant $c=c(\mathcal{H})$ satisfying $\gamma (G)\leq c$ for every connected $\mathcal{H}$-free graph $G$, where $\gamma (G)$ is the domination number of $G$.
\end{abstract}

\section{Introduction}\label{sec1}

All graphs considered in this paper are finite, simple, and undirected.
Let $G$ be a graph.
Let $V(G)$ and $E(G)$ denote the {\it vertex set} and the {\it edge set} of $G$, respectively.
For a vertex $x\in V(G)$, let $N_{G}(x)$ and $N_{G}[x]$ denote the {\it open neighborhood} and the {\it closed neighborhood}, respectively; thus $N_{G}(x)=\{y\in V(G): xy\in E(G)\}$ and $N_{G}[x]=N_{G}(x)\cup \{x\}$.
For a set $X\subseteq V(G)$, let $N_{G}[X]=\bigcup _{x\in X}N_{G}[x]$.
For a vertex $x\in V(G)$ and a non-negative integer $i$, let $N^{i}_{G}(x)=\{y\in V(G):$ the distance between $x$ and $y$ in $G$ is $i\}$.
Note that $N^{0}_{G}(x)=\{x\}$ and $N^{1}_{G}(x)=N_{G}(x)$.
Let $K_{n}$ and $P_{n}$ denote the {\it complete graph} and the {\it path} of order $n$, respectively.
For terms and symbols not defined in this paper, we refer the reader to \cite{D}.

Let $G$ be a graph.
For two sets $X,Y\subseteq V(G)$, we say that $X$ {\it dominates} $Y$ if $Y\subseteq N_{G}[X]$.
A subset of $V(G)$ which dominates $V(G)$ is called a {\it dominating set} of $G$.
The minimum cardinality of a dominating set of $G$, denoted by $\gamma (G)$, is called the {\it domination number} of $G$.
Since the determining problem of the value $\gamma (G)$ is NP-complete (see \cite{GJ}), many researchers have tried to find good bounds for the domination number (see \cite{HHS1}).
One of the most famous results is due to Ore~\cite{O} who proved that every connected graph $G$ of order at least two satisfies $\gamma (G)\leq |V(G)|/2$.
Here one problem naturally arises:
What additional conditions allow better upper bounds on the domination number?
In this paper, we focus on forbidden induced subgraph conditions.

For a graph $G$ and a set $\HH$ of connected graphs, $G$ is said to be {\it $\HH$-free} if $G$ contains no graph in $\HH$ as an induced subgraph.
In this context, members of $\HH$ are called {\it forbidden subgraphs}.
If $G$ is $\{H\}$-free, then $G$ is simply said to be {\it $H$-free}.
For two sets $\mathcal{H}_{1}$ and $\mathcal{H}_{2}$ of connected graphs, we write $\mathcal{H}_{1}\leq \mathcal{H}_{2}$ if for every $H_{2}\in \mathcal{H}_{2}$, there exists $H_{1}\in \mathcal{H}_{1}$ such that $H_{1}$ is an induced subgraph of $H_{2}$.
The relation ``$\leq $'' between two sets of forbidden subgraphs was introduced in \cite{FKLOPS}.
Note that if $\mathcal{H}_{1}\leq \mathcal{H}_{2}$, then every $\mathcal{H}_{1}$-free graph is also $\mathcal{H}_{2}$-free.

Let $K_{1,3}$ and $K^{*}_{3}$ denote the two unique graphs having degree sequence $(3,1,1,1)$ and $(3,3,3,1,1,1)$, respectively.
Cockayne, Ko and Shepherd~\cite{CKS} (see also Theorem~2.9 in \cite{HHS1}) proved that every connected $\{K_{1,3},K^{*}_{3}\}$-free graph $G$ satisfies $\gamma (G)\leq \lceil |V(G)|/3 \rceil $.
Indeed, Duffus, Gould and Jacobson~\cite{DGJ} proved that every connected $\{K_{1,3},K^{*}_{3}\}$-free graph has a Hamiltonian path.
Since $\gamma (P_{n})=\lceil n/3 \rceil $ for every integer $n$, the above inequality is a consequence of this result.
Furthermore, forbidden induced subgraph conditions for domination-like invariants were widely studied (see, for example, \cite{DRV,DGH,H,JK}).

In this paper, we will characterize the sets $\HH$ of connected graphs satisfying the condition that
\begin{enumerate}
\item[{\bf (A1)}]
there exists a constant $c=c(\HH)$ such that $\gamma (G)\leq c$ for every connected $\HH$-free graph $G$.
\end{enumerate}
Let $n\geq 1$ be an integer.
Let $K^{*}_{n}$ denote the graph with $V(K^{*}_{n})=\{x_{i}:1\leq i\leq n\}\cup \{y_{i}:1\leq i\leq n\}$ and $E(K^{*}_{n})=\{x_{i}x_{j}:1\leq i<j\leq n\}\cup \{x_{i}y_{i}:1\leq i\leq n\}$, and let $S^{*}_{n}$ denote the graph with $V(S^{*}_{n})=\{x\}\cup \{y_{i}:1\leq i\leq n\}\cup \{z_{i}:1\leq i\leq n\}$ and $E(S^{*}_{n})=\{xy_{i}:1\leq i\leq n\}\cup \{y_{i}z_{i}:1\leq i\leq n\}$ (see Figure~\ref{f1}).
Our main result is the following.

\begin{figure}
\begin{center}
{\unitlength 0.1in%
\begin{picture}(45.6000,12.0000)(-1.6000,-16.0000)%
\put(5.2000,-7.0000){\makebox(0,0)[rb]{{\color[named]{Black}{$K^{*}_{n}:$}}}}%
%
{\color[named]{Black}{%
\special{pn 0}%
\special{sh 1.000}%
\special{ia 800 705 50 50 0.0000000 6.2831853}%
}}%
{\color[named]{Black}{%
\special{pn 8}%
\special{ar 800 705 50 50 0.0000000 6.2831853}%
}}%
%
{\color[named]{Black}{%
\special{pn 8}%
\special{ar 1200 1205 600 230 0.0000000 6.2831853}%
}}%
%
{\color[named]{Black}{%
\special{pn 0}%
\special{sh 1.000}%
\special{ia 800 1105 50 50 0.0000000 6.2831853}%
}}%
{\color[named]{Black}{%
\special{pn 8}%
\special{ar 800 1105 50 50 0.0000000 6.2831853}%
}}%
%
{\color[named]{Black}{%
\special{pn 0}%
\special{sh 1.000}%
\special{ia 1000 1105 50 50 0.0000000 6.2831853}%
}}%
{\color[named]{Black}{%
\special{pn 8}%
\special{ar 1000 1105 50 50 0.0000000 6.2831853}%
}}%
%
{\color[named]{Black}{%
\special{pn 0}%
\special{sh 1.000}%
\special{ia 1600 1105 50 50 0.0000000 6.2831853}%
}}%
{\color[named]{Black}{%
\special{pn 8}%
\special{ar 1600 1105 50 50 0.0000000 6.2831853}%
}}%
%
{\color[named]{Black}{%
\special{pn 4}%
\special{sh 1}%
\special{ar 1200 1105 16 16 0 6.2831853}%
\special{sh 1}%
\special{ar 1400 1105 16 16 0 6.2831853}%
\special{sh 1}%
\special{ar 1300 1105 16 16 0 6.2831853}%
\special{sh 1}%
\special{ar 1300 1105 16 16 0 6.2831853}%
}}%
%
{\color[named]{Black}{%
\special{pn 0}%
\special{sh 1.000}%
\special{ia 1000 705 50 50 0.0000000 6.2831853}%
}}%
{\color[named]{Black}{%
\special{pn 8}%
\special{ar 1000 705 50 50 0.0000000 6.2831853}%
}}%
%
{\color[named]{Black}{%
\special{pn 0}%
\special{sh 1.000}%
\special{ia 1600 705 50 50 0.0000000 6.2831853}%
}}%
{\color[named]{Black}{%
\special{pn 8}%
\special{ar 1600 705 50 50 0.0000000 6.2831853}%
}}%
%
{\color[named]{Black}{%
\special{pn 8}%
\special{pa 1600 705}%
\special{pa 1600 1105}%
\special{fp}%
\special{pa 1000 1105}%
\special{pa 1000 705}%
\special{fp}%
\special{pa 800 705}%
\special{pa 800 1105}%
\special{fp}%
}}%
\put(8.0000,-12.4500){\makebox(0,0){{\color[named]{Black}{$x_{1}$}}}}%
\put(10.0000,-12.4500){\makebox(0,0){{\color[named]{Black}{$x_{2}$}}}}%
\put(16.0000,-12.4500){\makebox(0,0){{\color[named]{Black}{$x_{n}$}}}}%
\put(16.0000,-5.5500){\makebox(0,0){{\color[named]{Black}{$y_{n}$}}}}%
\put(10.0000,-5.5500){\makebox(0,0){{\color[named]{Black}{$y_{2}$}}}}%
\put(8.0000,-5.5500){\makebox(0,0){{\color[named]{Black}{$y_{1}$}}}}%
%
{\color[named]{Black}{%
\special{pn 0}%
\special{sh 1.000}%
\special{ia 2800 705 50 50 0.0000000 6.2831853}%
}}%
{\color[named]{Black}{%
\special{pn 8}%
\special{ar 2800 705 50 50 0.0000000 6.2831853}%
}}%
%
{\color[named]{Black}{%
\special{pn 0}%
\special{sh 1.000}%
\special{ia 2800 1105 50 50 0.0000000 6.2831853}%
}}%
{\color[named]{Black}{%
\special{pn 8}%
\special{ar 2800 1105 50 50 0.0000000 6.2831853}%
}}%
%
{\color[named]{Black}{%
\special{pn 0}%
\special{sh 1.000}%
\special{ia 3000 1105 50 50 0.0000000 6.2831853}%
}}%
{\color[named]{Black}{%
\special{pn 8}%
\special{ar 3000 1105 50 50 0.0000000 6.2831853}%
}}%
%
{\color[named]{Black}{%
\special{pn 0}%
\special{sh 1.000}%
\special{ia 3000 705 50 50 0.0000000 6.2831853}%
}}%
{\color[named]{Black}{%
\special{pn 8}%
\special{ar 3000 705 50 50 0.0000000 6.2831853}%
}}%
%
{\color[named]{Black}{%
\special{pn 0}%
\special{sh 1.000}%
\special{ia 3600 705 50 50 0.0000000 6.2831853}%
}}%
{\color[named]{Black}{%
\special{pn 8}%
\special{ar 3600 705 50 50 0.0000000 6.2831853}%
}}%
%
{\color[named]{Black}{%
\special{pn 0}%
\special{sh 1.000}%
\special{ia 3600 1105 50 50 0.0000000 6.2831853}%
}}%
{\color[named]{Black}{%
\special{pn 8}%
\special{ar 3600 1105 50 50 0.0000000 6.2831853}%
}}%
%
{\color[named]{Black}{%
\special{pn 0}%
\special{sh 1.000}%
\special{ia 3200 1505 50 50 0.0000000 6.2831853}%
}}%
{\color[named]{Black}{%
\special{pn 8}%
\special{ar 3200 1505 50 50 0.0000000 6.2831853}%
}}%
%
{\color[named]{Black}{%
\special{pn 8}%
\special{pa 3200 1505}%
\special{pa 2800 1105}%
\special{fp}%
\special{pa 2800 1105}%
\special{pa 2800 705}%
\special{fp}%
\special{pa 3000 705}%
\special{pa 3000 705}%
\special{fp}%
\special{pa 3000 705}%
\special{pa 3000 1105}%
\special{fp}%
\special{pa 3000 1105}%
\special{pa 3200 1505}%
\special{fp}%
\special{pa 3200 1505}%
\special{pa 3600 1105}%
\special{fp}%
\special{pa 3600 1105}%
\special{pa 3600 705}%
\special{fp}%
}}%
%
{\color[named]{Black}{%
\special{pn 4}%
\special{sh 1}%
\special{ar 3200 910 16 16 0 6.2831853}%
\special{sh 1}%
\special{ar 3400 910 16 16 0 6.2831853}%
\special{sh 1}%
\special{ar 3300 910 16 16 0 6.2831853}%
\special{sh 1}%
\special{ar 3300 910 16 16 0 6.2831853}%
}}%
\put(33.3000,-15.0000){\makebox(0,0){{\color[named]{Black}{$x$}}}}%
\put(26.7000,-11.1000){\makebox(0,0){{\color[named]{Black}{$y_{1}$}}}}%
\put(31.5000,-11.1000){\makebox(0,0){{\color[named]{Black}{$y_{2}$}}}}%
\put(37.5000,-11.1000){\makebox(0,0){{\color[named]{Black}{$y_{n}$}}}}%
\put(36.0000,-5.5500){\makebox(0,0){{\color[named]{Black}{$z_{n}$}}}}%
\put(30.0000,-5.5500){\makebox(0,0){{\color[named]{Black}{$z_{2}$}}}}%
\put(28.0000,-5.5500){\makebox(0,0){{\color[named]{Black}{$z_{1}$}}}}%
\put(17.6000,-14.3000){\makebox(0,0){{\color[named]{Black}{$K_{n}$}}}}%
\put(25.2000,-7.0000){\makebox(0,0)[rb]{{\color[named]{Black}{$S^{*}_{n}:$}}}}%
%
{\color[named]{Black}{%
\special{pn 4}%
\special{pa 4000 400}%
\special{pa 4400 400}%
\special{pa 4400 1600}%
\special{pa 4000 1600}%
\special{pa 4000 400}%
\special{ip}%
}}%
\end{picture}}%

\caption{Graphs $K^{*}_{n}$ and $S^{*}_{n}$}
\label{f1}
\end{center}
\end{figure}
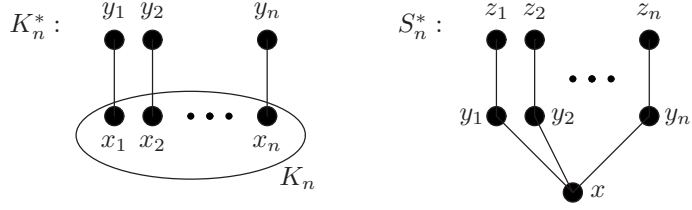

\begin{thm}
\label{thm1}
Let $\HH$ be a set of connected graphs.
Then $\HH$ satisfies (A1) if and only if $\HH\leq \{K^{*}_{k},S^{*}_{\ell},P_{m}\}$ for some positive integers $k$, $\ell$ and $m$.
\end{thm}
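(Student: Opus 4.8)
The plan is to prove the two implications separately after a single common reduction. As recorded in the excerpt, $\HH\le\KK$ forces every $\HH$-free graph to be $\KK$-free. Hence the ``if'' direction reduces to one uniform estimate: it suffices to show that for each triple $(k,\ell,m)$ there is a constant $c=c(k,\ell,m)$ with $\gamma(G)\le c$ for every connected $\{K^*_k,S^*_\ell,P_m\}$-free graph $G$. Indeed, if $\HH\le\{K^*_k,S^*_\ell,P_m\}$ then every $\HH$-free graph is $\{K^*_k,S^*_\ell,P_m\}$-free, so (A1) holds with the same $c$.

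The ``only if'' direction is the short one, and I would argue it directly from the three families, using $\gamma(K^*_n)=\gamma(S^*_n)=n$ (the closed neighbourhoods of the $n$ pendant vertices are pairwise disjoint, and the $n$ ``inner'' vertices dominate) together with $\gamma(P_n)=\lceil n/3\rceil$, all tending to infinity. Suppose $\HH$ satisfies (A1) with constant $c$ and set $n=3(c+1)$. Each of $K^*_n$, $S^*_n$, $P_n$ is connected with domination number exceeding $c$, so by (A1) none of them is $\HH$-free; that is, each of the three contains some member of $\HH$ as an induced subgraph. By the definition of ``$\le$'' this is exactly $\HH\le\{K^*_n,S^*_n,P_n\}$, giving the conclusion with $k=\ell=m=n$.

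The content is the sufficiency estimate. First, $P_m$-freeness is a diameter bound: a shortest path between two vertices is induced, so $\mathrm{diam}(G)\le m-2$. Now suppose $\gamma(G)$ is large and fix a minimum dominating set $D=\{v_1,\dots,v_t\}$. Minimality gives each $v_i$ a private neighbour $p_i$; the $p_i$ are distinct and satisfy $v_j\notin N_G[p_i]$ for $j\ne i$, so the $v_ip_i$ are $t$ pairwise vertex-disjoint ``private edges'' whose only controlled cross-non-edges are $v_jp_i$ with $i\ne j$. Colouring each pair $\{i,j\}$ according to whether $v_iv_j$ and $p_ip_j$ are edges and applying Ramsey's theorem twice yields a large index set $I$ on which each of $\{v_i\}_{i\in I}$ and $\{p_i\}_{i\in I}$ is a clique or is independent. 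If one side is a clique and the other is independent, the subgraph induced on $\{v_i,p_i:i\in I\}$ is precisely $K^*_{|I|}$, contradicting $K^*_k$-freeness once $|I|\ge k$.

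Two homogeneous patterns remain, and these are where the real work lies. The \emph{both-cliques} pattern should be ruled out by minimality of $D$: a clique of stalks is dominated by one $v_{i_0}$ and a clique of feet by one $p_{i_0}$, so replacing the stalks by $\{v_{i_0},p_{i_0}\}$ ought to shrink $D$; making this airtight requires choosing the private neighbours carefully, since a stalk may have several private neighbours, which is part of the bookkeeping. The genuinely hard case is the \emph{both-independent} pattern, a large induced matching $\{v_ip_i:i\in I\}$ inside a connected graph of diameter at most $m-2$, from which I must extract an induced $S^*_\ell$ or an induced $P_m$. Here I would first apply Ramsey to the distances $\mathrm{dist}_G(v_i,v_j)\in\{2,\dots,m-2\}$ to make all stalks pairwise equidistant, and then split on a dichotomy: either some vertex $x$ is adjacent to at least $\ell$ stalks while avoiding their feet (a foot adjacent to $x$ together with a second such edge would again contradict minimality), in which case $x$ with those $v_i,p_i$ induces $S^*_\ell$; or no such popular centre exists, and one threads the stalks through distinct connecting vertices to build an induced path that the equidistance and the diameter bound force up to length $m$. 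This ``popular centre versus long induced path'' dichotomy for a large induced matching — the step where $P_m$-freeness is indispensable — is the main obstacle, the rest being the casework for self-private vertices $v_i$ (pass to a genuine private pendant when one exists, and otherwise use that all their neighbours are multiply dominated) and the multiplicity of private neighbours.
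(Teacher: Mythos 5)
Your ``only if'' argument is correct and essentially the same as the paper's. The ``if'' direction, however, has genuine gaps in exactly the two cases you defer, and neither is a matter of bookkeeping. In the both-cliques case, swapping the stalks $\{v_i: i\in I\setminus\{i_0\}\}$ for $\{v_{i_0},p_{i_0}\}$ does not contradict minimality of $D$: each removed $v_i$ may privately dominate many vertices other than $p_i$, and after the swap nothing dominates them. Nor does the configuration itself contain a forbidden subgraph: two $n$-cliques joined by a perfect matching is $\{K^*_3,S^*_3,P_5\}$-free for every $n$, so this case cannot be closed by exhibiting an induced $K^*_k$, $S^*_\ell$ or $P_m$ either. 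In the both-independent case, the sub-claim that a vertex $x$ adjacent to two feet contradicts minimality fails for the same reason (replacing $v_i,v_j$ by $x$ orphans the other private neighbours of $v_i$ and $v_j$), and without it your dichotomy is false as a statement about induced matchings: take the matching $\{v_ip_i:1\le i\le n\}$ together with one extra vertex $x$ joined to all $2n$ vertices. This graph is connected, $\{K^*_3,S^*_3,P_4\}$-free, and contains an arbitrarily large induced matching, yet it has no popular centre avoiding the feet and no long induced path. So a large induced matching in a connected $P_m$-free graph by itself yields neither an induced $S^*_\ell$ nor an induced $P_m$; a correct argument must exploit the dominating-set structure in a way your sketch does not supply.

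The paper's proof avoids both traps by abandoning the global minimum dominating set and instead working with the distance layers $N^i_G(a)$ from a fixed root $a$ (here $P_m$-freeness bounds the number of layers), dominating each layer separately by induction on $i$ (Lemmas~\ref{lem1.1} and~\ref{lem1.2}). The minimality and Ramsey arguments are run on a minimal set $U\subseteq N^{i-1}_G(a)$ dominating an independent set $X\subseteq N^i_G(a)$: a clique in $U$ together with private neighbours in $X$ gives $K^*_k$, while a large independent subset of $U$ is, by induction, dominated by few vertices of $N^{i-2}_G(a)$, and the pigeonhole vertex $u'\in N^{i-2}_G(a)$ serving as the centre of the $S^*_\ell$ is \emph{automatically} non-adjacent to the relevant vertices of $N^i_G(a)$, because they lie two layers away. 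This layer-enforced non-adjacency is precisely the control your flat approach cannot secure for the centre of the hoped-for $S^*_\ell$; it is the missing idea, and importing it essentially means redoing the proof in the paper's layered form.
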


We conclude this section by considering the case where a set $\HH$ can contain disconnected graphs.
Then the following proposition holds.

\begin{prop}
Let $\HH$ be a set of graphs.
Then $\HH$ satisfies (A1) if and only if $\HH\leq \{\overline{K_{k}}\}$ for some positive integer $k$.
\end{prop}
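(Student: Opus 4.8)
The plan is to reduce the relation $\HH\le\{\overline{K_{k}}\}$ to a transparent structural condition and then read off (A1) from the elementary inequality $\gamma(G)\le\alpha(G)$, where $\alpha(G)$ denotes the independence number of $G$. First I would record the reformulation: by the definition of ``$\le$'', the statement ``$\HH\le\{\overline{K_{k}}\}$ for some positive integer $k$'' holds if and only if $\HH$ contains some edgeless graph $\overline{K_{j}}$. Indeed, every induced subgraph of the edgeless graph $\overline{K_{k}}$ is again edgeless, hence of the form $\overline{K_{j}}$ with $j\le k$, and conversely $\overline{K_{j}}$ is an induced subgraph of itself. I will use this equivalent formulation in both directions.

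For sufficiency, assume $\overline{K_{j}}\in\HH$ and let $G$ be any $\HH$-free graph. Since $\overline{K_{j}}$ is forbidden, $G$ contains no $j$ pairwise non-adjacent vertices, so $\alpha(G)\le j-1$. A maximal independent set $S$ of $G$ is automatically a dominating set and satisfies $|S|\le\alpha(G)$, whence $\gamma(G)\le\alpha(G)\le j-1$. Thus (A1) holds with $c=j-1$ (the degenerate case $j=1$, in which no nonempty graph is $\HH$-free, holds trivially).

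For necessity I would argue by contraposition. Suppose $\HH$ contains no edgeless graph, i.e.\ every $H\in\HH$ has at least one edge. Then for each $n$ the edgeless graph $\overline{K_{n}}$ is $\HH$-free, since a graph possessing an edge cannot occur as an induced subgraph of an edgeless graph. As $\gamma(\overline{K_{n}})=n$, the domination number is unbounded on the family $\{\overline{K_{n}}:n\ge 1\}$ of $\HH$-free graphs, so no constant $c$ as in (A1) can exist.

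The only point requiring genuine care is this necessity direction, and it is exactly here that the extra generality of the present section is used: the witnesses forcing large domination number are the totally disconnected graphs $\overline{K_{n}}$, which become available precisely because $\HH$ is now permitted to forbid disconnected graphs. (In the connected setting one cannot in general manufacture hosts of large domination number by forbidding connected graphs alone, which is why the richer criterion $\{K^{*}_{k},S^{*}_{\ell},P_{m}\}$ of Theorem~\ref{thm1} is needed there.) Beyond isolating this construction I anticipate no real obstacle: once the reformulation and the bound $\gamma(G)\le\alpha(G)$ are in place, both implications are short.
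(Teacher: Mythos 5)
Your proof is correct and essentially identical to the paper's: sufficiency via the fact that a maximal independent set is dominating (giving $\gamma (G)\leq j-1$ for every $\HH$-free $G$), and necessity via the edgeless witnesses $\overline{K_{n}}$ with $\gamma (\overline{K_{n}})=n$, which the paper phrases directly ($\overline{K_{c+1}}$ cannot be $\HH$-free) and you phrase contrapositively. One remark: exactly like the paper's own proof, your necessity argument tacitly reads (A1) as quantifying over all (not merely connected) $\HH$-free graphs, since the witnesses $\overline{K_{n}}$ are disconnected; this is the intended reading in this proposition, as the literal ``connected'' version of (A1) would make the statement false (e.g.\ $\HH=\{P_{3}\}$).
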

\begin{proof}
Suppose that $\HH$ satisfies (A1).
Then there exists a constant $c=c(\HH)$ such that $\gamma (G)\leq c$ for every connected $\HH$-free graph $G$.
Since $\gamma (\overline{K_{c+1}})=c+1$, $\overline{K_{c+1}}$ is not $\HH$-free, and so $\HH\leq \{\overline{K_{c+1}}\}$.

On the other hand, if $\HH\leq \{\overline{K_{k}}\}$, then every $\HH$-free graph $G$ satisfies $\gamma (G)\leq k-1$ because every maximal independent set of $G$ is a dominating set.
\end{proof}

\section{Proof of Theorem~\ref{thm1}}\label{sec2}

For positive integers $s$ and $t$, let $R(s,t)$ denote the {\it Ramsey number} with respect to $s$ and $t$.
For positive integers $k$, $\ell$ and $i$, we recursively define $g_{k,\ell}(i)$ as follows:
$$
\begin{cases}
g_{k,\ell}(1) = 1\\
g_{k,\ell}(i) = R(k,(\ell-1)g_{k,\ell}(i-1)+1)-1~~~~(i\geq 2).
\end{cases}
$$

\begin{lem}
\label{lem1.1}
Let $k$, $\ell$ and $i$ be positive integers.
Let $G$ be a $\{K^{*}_{k},S^{*}_{\ell}\}$-free graph, and let $a$ be a vertex of $G$.
Then for an independent set $X\subseteq N^{i}_{G}(a)$, there exists $U\subseteq N^{i-1}_{G}(a)$ with $|U|\leq g_{k,\ell}(i)$ that dominates $X$.
\end{lem}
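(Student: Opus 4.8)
The plan is to induct on $i$. For the base case $i=1$, the set $X$ lies in $N_G(a)$, so the single vertex $a\in N^0_G(a)$ already dominates $X$; since $g_{k,\ell}(1)=1$, taking $U=\{a\}$ (or $U=\emptyset$ if $X=\emptyset$) works.

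For the inductive step, assume the statement for $i-1$ and write $m=(\ell-1)g_{k,\ell}(i-1)+1$, so that $g_{k,\ell}(i)=R(k,m)-1$. Because every vertex of $X\subseteq N^{i}_G(a)$ has a neighbor in $N^{i-1}_G(a)$, the set $N^{i-1}_G(a)$ dominates $X$; I would choose $U\subseteq N^{i-1}_G(a)$ to be an inclusion-minimal dominating set of $X$. Minimality gives, for each $w\in U$, a private vertex $x_w\in X$ adjacent to $w$ but to no other vertex of $U$ (it cannot equal $w$ or lie in $U$, since $X$ and $U$ occupy different distance layers); the vertices $x_w$ are pairwise distinct and, lying in $X$, pairwise non-adjacent. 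It then remains to prove $|U|\le R(k,m)-1$.

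Suppose instead that $|U|\ge R(k,m)$. Applying Ramsey's theorem to the graph induced on $U$ yields either a clique of order $k$ or an independent set of order $m$ inside $U$. In the first case, clique vertices $w_1,\dots,w_k$ together with their private neighbors $x_{w_1},\dots,x_{w_k}$ should induce a copy of $K^{*}_{k}$: the $w_j$ form the clique, each $w_j$ meets only its own pendant $x_{w_j}$ by privacy, and the $x_{w_j}$ are independent; this contradicts $K^{*}_{k}$-freeness. In the second case, the independent set $\{w_1,\dots,w_m\}\subseteq N^{i-1}_G(a)$ is handled by the induction hypothesis, which supplies $U'\subseteq N^{i-2}_G(a)$ with $|U'|\le g_{k,\ell}(i-1)$ dominating it. Since $m>(\ell-1)|U'|$, the pigeonhole principle produces some $u\in U'$ adjacent to at least $\ell$ of the $w_j$, say $w_{j_1},\dots,w_{j_\ell}$; then $u$, these $\ell$ vertices, and their private neighbors $x_{w_{j_1}},\dots,x_{w_{j_\ell}}$ should induce a copy of $S^{*}_{\ell}$, contradicting $S^{*}_{\ell}$-freeness.

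The verification that these two configurations are genuinely \emph{induced} is the delicate point, and it is exactly where the distance-layer structure enters: a vertex of $N^{i-2}_G(a)$ cannot be adjacent to a vertex of $N^{i}_G(a)$, so $u$ misses every $x_{w_{j_t}}$, supplying precisely the non-edges demanded by $S^{*}_{\ell}$, while the remaining non-edges come from the independence of $X$ (and of the chosen $w_j$) and from the privacy of the $x_w$. Granting these checks, both cases are impossible, so $|U|\le R(k,m)-1=g_{k,\ell}(i)$, which completes the induction.
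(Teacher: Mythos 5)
Your proposal is correct and follows essentially the same route as the paper: induction on $i$, an inclusion-minimal dominating subset $U\subseteq N^{i-1}_{G}(a)$ with private neighbors $x_{w}\in X$, and a Ramsey dichotomy on $U$ where a $k$-clique yields an induced $K^{*}_{k}$ and an independent set of size $(\ell-1)g_{k,\ell}(i-1)+1$ is fed to the induction hypothesis and the pigeonhole principle to yield an induced $S^{*}_{\ell}$. Your explicit verification that the distance-layer structure (no edges between $N^{i-2}_{G}(a)$ and $N^{i}_{G}(a)$) makes both configurations induced is exactly the point the paper uses implicitly, so nothing is missing.
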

\begin{proof}
We proceed by induction on $i$.
If $i=1$, then $U=\{a\}$ is a desired subset of $N^{i-1}_{G}(a)=\{a\}$.
Thus we may assume that $i\geq 2$.
Note that $N^{i-1}_{G}(a)$ dominates $X$.
Let $U$ be a minimal subset of $N^{i-1}_{G}(a)$ that dominates $X$.
It suffices to show that $|U|\leq R(k,(\ell-1)g_{k,\ell}(i-1)+1)-1=g_{k,\ell}(i)$.

By way of contradiction, suppose that $|U|\geq R(k,(\ell-1)g_{k,\ell}(i-1)+1)$.
For each $u\in U$, since $U-\{u\}$ does not dominate $X$ by the minimality of $U$, there exists a vertex $x_{u}\in X$ such that $N_{G}(x_{u})\cap U=\{u\}$.
Recall that $X$ is an independent set.
If there exists a clique $U_{1}\subseteq U$ with $|U_{1}|=k$, then the subgraph of $G$ induced by $U_{1}\cup \{x_{u}:u\in U_{1}\}$ is isomorphic to $K^{*}_{k}$, which contradicts the $K^{*}_{k}$-freeness of $G$.
Since $|U|\geq R(k,(\ell-1)g_{k,\ell}(i-1)+1)$, this implies that there exists an independent set $U_{2}\subseteq U$ with $|U_{2}|=(\ell-1)g_{k,\ell}(i-1)+1$.
By the induction hypothesis, there exists $U'\subseteq N^{i-2}_{G}(a)$ with $|U'|=g_{k,\ell}(i-1)$ that dominates $U_{2}$.
By the pigeon-hole principle, there exists a vertex $u'\in U'$ such that $|N_{G}(u')\cap U_{2}|\geq \ell$.
Let $\tilde{U}_{2}\subseteq N_{G}(u')\cap U_{2}$ be a set with $|\tilde{U}_{2}|=\ell$.
Then the subgraph of $G$ induced by $\{u'\}\cup \tilde{U}_{2}\cup \{x_{u}:u\in \tilde{U}_{2}\}$ is isomorphic to $S^{*}_{\ell}$, which is a contradiction.
\end{proof}

For positive integers $k$, $\ell$ and $i$ with $i\geq 2$, let $f_{k,\ell}(i) = R(k,\ell)g_{k,\ell}(i)$.

\begin{lem}
\label{lem1.2}
Let $k$, $\ell$ and $i$ be positive integers with $i\geq 2$.
Let $G$ be a $\{K^{*}_{k},S^{*}_{\ell}\}$-free graph, and let $a$ be a vertex of $G$.
Then there exists $\hat{U}\subseteq V(G)$ with $|\hat{U}|\leq f_{k,\ell}(i)$ that dominates $N^{i}_{G}(a)$.
\end{lem}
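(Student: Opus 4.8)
The plan is to reduce the domination of the non-independent set $N^i_G(a)$ to a bounded family of independent sets handled by Lemma~\ref{lem1.1}, and then to clean up the residual vertices by a Ramsey-theoretic extraction of $K^*_k$ or $S^*_\ell$. First I would fix a maximal independent set $I$ of the induced subgraph $G[N^i_G(a)]$; by maximality, $I$ dominates $N^i_G(a)$. Applying Lemma~\ref{lem1.1} to the independent set $I\subseteq N^i_G(a)$ yields a set $U\subseteq N^{i-1}_G(a)$ with $|U|\le g_{k,\ell}(i)$ that dominates $I$. Because $U\subseteq N^{i-1}_G(a)$ dominates $I\subseteq N^i_G(a)$, each $v\in I$ is in fact adjacent to some $u\in U$, so with $J_u:=N_G(u)\cap I$ we have $I=\bigcup_{u\in U}J_u$, and consequently the sets $S_u:=N_G[J_u]\cap N^i_G(a)$ cover $N^i_G(a)$. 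It thus suffices to dominate each $S_u$ by at most $R(k,\ell)$ vertices, since summing over the at most $g_{k,\ell}(i)$ vertices of $U$ then gives the bound $f_{k,\ell}(i)=R(k,\ell)g_{k,\ell}(i)$.

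Fix $u\in U$. The vertex $u$ already dominates $S_u\cap N_G[u]$, so the remaining task is to dominate $T_u:=S_u\setminus N_G[u]$ by few vertices. I would choose $D_u\subseteq J_u$ minimal subject to $N_G[D_u]\supseteq T_u$ (possible since $J_u$ dominates $T_u$), with the goal of proving $|D_u|\le R(k,\ell)-1$; then $\{u\}\cup D_u$ dominates $S_u$ with at most $R(k,\ell)$ vertices. Minimality gives each $d\in D_u$ a private neighbor $q_d\in T_u$, namely $q_d\in N_G(d)$ with $q_d\notin N_G[d']$ for all $d'\in D_u\setminus\{d\}$. The point of carving $T_u$ out of $S_u$ beforehand is that $q_d$ then avoids $N_G[u]$, so $q_d\neq d$ and $q_d$ is non-adjacent to $u$; moreover distinct private neighbors are distinct vertices.

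The decisive step is a Ramsey dichotomy on the private-neighbor set $Q_u=\{q_d:d\in D_u\}$. Assuming for contradiction $|D_u|\ge R(k,\ell)$, the equinumerous set $Q_u$ contains either a clique of order $k$ or an independent set of order $\ell$. In the clique case, the $k$ mutually adjacent $q_d$'s together with their pendants $d$ induce a $K^*_k$: the $d$'s are pairwise non-adjacent since $D_u\subseteq I$, and each $d$ attaches to its own $q_d$ alone by privacy. In the independent case, $u$ together with $\ell$ of the $d$'s and their pendants $q_d$ induces an $S^*_\ell$: $u$ is adjacent to each $d$, the $d$'s are independent, each $q_d$ hangs off $d$ and, lying outside $N_G[u]$ and private, is non-adjacent to $u$ and to the other $d$'s. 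Either outcome contradicts the $\{K^*_k,S^*_\ell\}$-freeness of $G$, forcing $|D_u|\le R(k,\ell)-1$.

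The main obstacle I anticipate is precisely the bookkeeping that these configurations are \emph{induced} copies with no stray edges: one must confirm the pendants are genuinely pendant (distinct from their clique or star vertices, attached to a unique one, and in the $S^*_\ell$ case non-adjacent to the center $u$). The device that resolves this, and which I would stress, is separating $T_u$ from $N_G[u]$ before selecting $D_u$, so that each private neighbor automatically skirts $u$ and delivers the needed non-adjacencies, while the Ramsey split on $Q_u$ simultaneously governs the adjacencies among the pendants themselves. Collecting $\hat U=\bigcup_{u\in U}(\{u\}\cup D_u)$ then dominates $\bigcup_{u\in U}S_u=N^i_G(a)$ and satisfies $|\hat U|\le f_{k,\ell}(i)$.
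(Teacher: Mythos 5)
Your proposal is correct, and its engine is the same as the paper's: Lemma~\ref{lem1.1} applied to a maximal independent set of $N^{i}_{G}(a)$ to produce $U$, then a minimality/private-neighbor argument, then a Ramsey dichotomy in which a clique among the private neighbors together with their (independent) dominators induces $K^{*}_{k}$, while an independent set among them induces $S^{*}_{\ell}$ centered at a vertex of $U$. Where you genuinely differ is the decomposition. The paper stays global: it takes one minimal set $X_{0}\subseteq X$ dominating $N^{i}_{G}(a)-N_{G}[U]$, assumes $|X_{0}|\geq (R(k,\ell)-1)g_{k,\ell}(i)+1$ for contradiction, and needs a pigeonhole step to find a single $u'\in U$ with at least $R(k,\ell)$ neighbors in $X_{0}$ before Ramsey can be applied to the corresponding private vertices. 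You localize at the outset, covering $N^{i}_{G}(a)$ by the sets $S_{u}=N_{G}[J_{u}]\cap N^{i}_{G}(a)$ and running a separate minimality-plus-Ramsey argument inside each $S_{u}$; this makes the pigeonhole step unnecessary, since Ramsey bounds each $|D_{u}|$ by $R(k,\ell)-1$ directly. The bookkeeping then differs only cosmetically: the paper gets $|U|+|X_{0}|\leq g_{k,\ell}(i)+(R(k,\ell)-1)g_{k,\ell}(i)$, you get $\sum_{u\in U}(1+|D_{u}|)\leq R(k,\ell)g_{k,\ell}(i)$, and both equal $f_{k,\ell}(i)$. Your localization also slightly simplifies the verification of the induced copies, since each private neighbor $q_{d}$ only needs to avoid $N_{G}[u]$ rather than all of $N_{G}[U]$, whereas the paper's single global choice of $X_{0}$ buys a shorter proof with one extremal selection and one claim. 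Both arguments are complete and yield the identical bound.
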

\begin{proof}
Let $X$ be a maximal independent subset of $N^{i}_{G}(a)$.
By Lemma~\ref{lem1.1}, there exists $U\subseteq N^{i-1}_{G}(a)$ with $|U|\leq g_{k,\ell}(i)$ that dominates $X$.
By the maximality of $X$, $X$ dominates $N^{i}_{G}(a)$, and so $X$ dominates $N^{i}_{G}(a)-N_{G}[U]$.
Let $X_{0}$ be a minimal subset of $X$ that dominates $N^{i}_{G}(a)-N_{G}[U]$.

\begin{claim}
\label{cl-lem1.2-1}
We have $|X_{0}|\leq (R(k,\ell)-1)g_{k,\ell}(i)$.
\end{claim}
\begin{proof}
Suppose that $|X_{0}|\geq (R(k,\ell)-1)g_{k,\ell}(i)+1$.
Since $U$ dominates $X_{0}$ and $|U|\leq g_{k,\ell}(i)$, there exists a vertex $u'\in U$ such that $|N_{G}(u')\cap X_{0}|\geq R(k,\ell)$.
For each $x\in X_{0}$, since $X_{0}-\{x\}$ does not dominate $N^{i}_{G}(a)-N_{G}[U]$ by the minimality of $X_{0}$, there exists a vertex $y_{x}\in N^{i}_{G}(a)-N_{G}[U]$ such that $N_{G}(y_{x})\cap X_{0}=\{x\}$.
Set $Y=\{y_{x}:x\in N_{G}(u')\cap X_{0}\}$, and for each $y\in Y$, write $N_{G}(y)\cap X_{0}=\{x_{y}\}$.
Note that $\{x_{y}:y\in Y\}\subseteq N_{G}(u')\cap X_{0}$ and $y_{x_{y}}=y$ for each $y\in Y$.
Since $|Y|=|N_{G}(u')\cap X_{0}|\geq R(k,\ell)$, there exists a clique $Y_{1}\subseteq Y$ with $|Y_{1}|=k$ or an independent set $Y_{2}\subseteq Y$ with $|Y_{2}|=\ell$.
Recall that $Y\subseteq N^{i}_{G}(a)-N_{G}[U]$, and so $N_{G}(u')\cap Y=\emptyset $.
If there exists a clique $Y_{1}\subseteq Y$ with $|Y_{1}|=k$, then the subgraph of $G$ induced by $Y_{1}\cup \{x_{y}:y\in Y_{1}\}$ is isomorphic to $K^{*}_{k}$; if there exists an independent set $Y_{2}\subseteq Y$ with $|Y_{2}|=\ell$, then the subgraph of $G$ induced by $\{u'\}\cup \{x_{y}:y\in Y_{2}\} \cup Y_{2}$ is isomorphic to $S^{*}_{\ell}$.
In either case, we obtain a contradiction.
\end{proof}

Recall that $X_{0}$ dominates $N^{i}_{G}(a)-N_{G}[U]$.
Hence $U\cup X_{0}$ dominates $N^{i}_{G}(a)$.
Furthermore, by the definition of $U$ and Claim~\ref{cl-lem1.2-1},
$$
|U\cup X_{0}|=|U|+|X_{0}|\leq g_{k,\ell}(i)+(R(k,\ell)-1)g_{k,\ell}(i)=f_{k,\ell}(i).
$$
Thus $\hat{U}=U\cup X_{0}$ is a desired set.
\end{proof}

\begin{proof}[of Theorem~\ref{thm1}]
We first prove the ``only if''part.
Let $\HH$ be a set of connected graphs satisfying (A1).
Then there exists a constant $c=c(\HH)$ such that $\gamma (G)\leq c$ for every connected $\HH$-free graph $G$.
Since we can easily verify that $\gamma (K^{*}_{c+1})=\gamma (S^{*}_{c+1})=\gamma (P_{3c+1})=c+1$, none of $K^{*}_{c+1}$, $S^{*}_{c+1}$ and $P_{3c+1}$ is $\HH$-free.
This implies that $\HH\leq \{K^{*}_{c+1},S^{*}_{c+1},P_{3c+1}\}$, as desired.

Next we prove the ``if'' part.
Let $\HH$ be a set of connected graphs such that $\HH\leq \{K^{*}_{k},S^{*}_{\ell},P_{m}\}$ for some positive integers $k$, $\ell$ and $m$.
Choose $k$, $\ell$ and $m$ so that $k+\ell+m$ is as small as possible.
Then $k$, $\ell$ and $m$ are uniquely determined.
In particular, the value $1+\sum _{2\leq i\leq m-2}f_{k,\ell}(i)$ only depends on $\HH$.
Furthermore, every $\HH$-free graph is also $\{K^{*}_{k},S^{*}_{\ell},P_{m}\}$-free.
Thus it suffices to show that every connected $\{K^{*}_{k},S^{*}_{\ell},P_{m}\}$-free graph $G$ satisfies $\gamma (G)\leq 1+\sum _{2\leq i\leq m-2}f_{k,\ell}(i)$.
Let $a\in V(G)$.
Since $G$ is $P_{m}$-free, $N^{i}_{G}(a)=\emptyset $ for all $i\geq m-1$.
Since $G$ is connected, this implies that $V(G)=\bigcup _{0\leq i\leq m-2}N^{i}_{G}(a)$.
Since $G$ is $\{K^{*}_{k},S^{*}_{\ell}\}$-free, it follows from Lemma~\ref{lem1.2} that for each $i$ with $2\leq i\leq m-2$, there exists a set $\hat{U}_{i}\subseteq V(G)$ with $|\hat{U}_{i}|\leq f_{k,\ell}(i)$ that dominates $N^{i}_{G}(a)$.
Since $\{a\}$ dominates $N^{0}_{G}(a)\cup N^{1}_{G}(a)$, $\{a\}\cup (\bigcup _{2\leq i\leq m-2}\hat{U}_{i})$ is a dominating set of $G$, and so
$$
\gamma (G)\leq |\{a\}|+\sum _{2\leq i\leq m-2}|\hat{U}_{i}|\leq 1+\sum _{2\leq i\leq m-2}f_{k,\ell}(i),
$$
as desired.

This completes the proof of Theorem~\ref{thm1}.
\end{proof}

\section{Concluding remark}\label{sec3}

In this paper, we characterized the sets $\HH$ of connected graphs satisfying (A1).
For similar problems concerning many domination-like invariants, we can use the sets appearing in Theorem~\ref{thm1}.

Let $\mu $ be an invariant of graphs, and assume that
\begin{enumerate}
\item[{\bf (D1)}]
there exist two constants $c_{1},c_{2}\in \mathbb{R}^{+}$ such that $c_{1}\gamma (G)\leq \mu (G)\leq c_{2}\gamma (G)$ for all connected graphs $G$.
\end{enumerate}
Note that many important domination-like invariants (for example, total domination number $\gamma _{t}$, paired domination number $\gamma _{\rm pr}$, Roman domination number $\gamma _{R}$, rainbow domination number $\gamma _{{\rm r}k}$, etc.) satisfy (D1).
Furthermore, we focus on the condition that
\begin{enumerate}
\item[{\bf (A'1)}]
there exists a constant $c'=c'(\mu,\HH)$ such that $\mu (G)\leq c$ for every connected $\HH$-free graph $G$.
\end{enumerate}

We first suppose that a set $\HH$ of connected graphs satisfies (A'1).
Note that
\begin{enumerate}[$\bullet $]
\item
$\mu (K^{*}_{\lceil (c'+1)/c_{1} \rceil })\geq c_{1}\gamma (K^{*}_{\lceil (c'+1)/c_{1} \rceil })=c_{1}\cdot \lceil (c'+1)/c_{1} \rceil \geq c'+1$,
\item
$\mu (S^{*}_{\lceil (c'+1)/c_{1} \rceil })\geq c_{1}\gamma (S^{*}_{\lceil (c'+1)/c_{1} \rceil })=c_{1}\cdot \lceil (c'+1)/c_{1} \rceil \geq c'+1$, and
\item
$\mu (P_{3\lceil (c'+1)/c_{1} \rceil +1})\geq c_{1}\gamma (P_{3\lceil (c'+1)/c_{1} \rceil +1})=c_{1}\cdot \lceil (c'+1)/c_{1} \rceil \geq c'+1$.
\end{enumerate}
Thus, by similar argument to the proof of ``only if'' part of Theorem~\ref{thm1}, we have $\HH\leq \{K^{*}_{k},S^{*}_{\ell},P_{m}\}$ for some positive integers $k$, $\ell$ and $m$.

On the contrary, suppose that a set $\HH$ of connected graphs satisfies $\HH\leq \{K^{*}_{k},S^{*}_{\ell},P_{m}\}$ for some positive integers $k$, $\ell$ and $m$.
Then by Theorem~\ref{thm1}, (A1) holds, and hence for a connected $\HH$-free graph $G$, we have
$$
\mu (G)\leq c_{2}\gamma (G)\leq c_{2}\cdot c(\HH).
$$
Consequently (A'1) holds (for $c'=c_{2}\cdot c(\HH)$).
Therefore, we obtain the following theorem.

\begin{thm}
\label{thm2}
Let $\mu $ be an invariant for graphs satisfying (D1), and let $\HH$ be a set of connected graphs.
Then $\HH$ satisfies (A'1) if and only if $\HH\leq \{K^{*}_{k},S^{*}_{\ell},P_{m}\}$ for positive integers $k$, $\ell$ and $m$.
\end{thm}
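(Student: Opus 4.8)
The plan is to reduce both directions to Theorem~\ref{thm1}, treating condition (D1) as a dictionary that translates bounds on $\mu$ into bounds on $\gamma$ and back. The guiding observation is that (D1) sandwiches $\mu$ between two fixed constant multiples of $\gamma$ on all connected graphs, so on any class of connected graphs $\mu$ is bounded precisely when $\gamma$ is bounded. Hence for a fixed $\HH$ the conditions (A'1) and (A1) should be equivalent, and the characterization of (A1) supplied by Theorem~\ref{thm1} will carry over unchanged. All the genuine combinatorial content already resides in Lemmas~\ref{lem1.1} and~\ref{lem1.2} and in Theorem~\ref{thm1}; here the work is purely to transport it across the equivalence.

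For the ``only if'' direction I would begin from the hypothesis that $\HH$ satisfies (A'1), so there is a constant $c'=c'(\mu,\HH)$ with $\mu(G)\leq c'$ for every connected $\HH$-free graph $G$. Applying the lower bound in (D1) gives $c_{1}\gamma(G)\leq \mu(G)\leq c'$, whence $\gamma(G)\leq c'/c_{1}$, and since $\gamma(G)$ is an integer we may take $c=\lfloor c'/c_{1}\rfloor$. This $c$ depends only on $\mu$ and $\HH$, so $\HH$ satisfies (A1), and Theorem~\ref{thm1} immediately yields $\HH\leq \{K^{*}_{k},S^{*}_{\ell},P_{m}\}$ for some positive integers $k$, $\ell$ and $m$.

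For the ``if'' direction I would assume $\HH\leq \{K^{*}_{k},S^{*}_{\ell},P_{m}\}$ and invoke Theorem~\ref{thm1} to obtain a constant $c=c(\HH)$ with $\gamma(G)\leq c$ for every connected $\HH$-free graph $G$. The upper bound in (D1) then gives $\mu(G)\leq c_{2}\gamma(G)\leq c_{2}\,c$, so (A'1) holds with $c'=c_{2}\,c(\HH)$. This completes the equivalence.

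Honestly, there is essentially no obstacle here once Theorem~\ref{thm1} is available: the only point requiring care is the constant bookkeeping, namely ensuring that the ceilings and floors and the factors $c_{1},c_{2}$ are combined so that the derived constants are finite and depend only on $\mu$ and $\HH$. I would note one stylistic variant for the ``only if'' part that sidesteps the abstract reduction to (A1): rather than bounding $\gamma$ and appealing to Theorem~\ref{thm1}, one can exhibit the explicit graphs $K^{*}_{n}$, $S^{*}_{n}$ and $P_{3n+1}$ with $n=\lceil (c'+1)/c_{1}\rceil$, check via (D1) that each has $\mu$-value at least $c_{1}n\geq c'+1>c'$, conclude that none of them is $\HH$-free, and read off the containment directly, exactly as in the proof of the ``only if'' part of Theorem~\ref{thm1}.
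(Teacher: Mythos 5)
Your proposal is correct and is in substance the paper's own proof: both directions reduce to Theorem~\ref{thm1} through the sandwich (D1), and your ``if'' direction (apply Theorem~\ref{thm1} to get $\gamma (G)\leq c(\HH)$, then $\mu (G)\leq c_{2}\gamma (G)\leq c_{2}\cdot c(\HH)$) is exactly what the paper does. The only difference is cosmetic: in the ``only if'' part you pass from (A'1) to (A1) via $\gamma (G)\leq c'/c_{1}$ and cite Theorem~\ref{thm1} as a black box, whereas the paper re-exhibits the graphs $K^{*}_{n}$, $S^{*}_{n}$ and $P_{3n+1}$ with $n=\lceil (c'+1)/c_{1}\rceil $ and bounds their $\mu $-values via (D1) --- precisely the ``stylistic variant'' you describe at the end, so both routes are sound.
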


\acknowledgements
I would like to thank anonymous referees for careful reading and helpful comments.

\end{document}